\font\smallit=cmti10
\renewcommand\section{\@startsection {section}{1}{\z@}
{-30pt \@plus -1ex \@minus -.2ex}
{2.3ex \@plus.2ex}
{\normalfont\normalsize\bfseries\boldmath}}
\renewcommand\subsection{\@startsection{subsection}{2}{\z@}
{-3.25ex\@plus -1ex \@minus -.2ex}
{1.5ex \@plus .2ex}
{\normalfont\normalsize\bfseries\boldmath}}
\renewcommand{\@seccntformat}[1]{\csname the#1\endcsname. }
\newtheorem{theorem}{Theorem}
\newtheorem{corollary}{Corollary}
\theoremstyle{definition}
\begin{document}

\begin{center}
\uppercase{\bf \boldmath On Quotients of a More General Theorem of Wilson}
\vskip 20pt
{\bf Ivan V. Morozov}\\
{\smallit The City College of New York, New York, New York}\\
{\tt imorozo000@citymail.cuny.edu}
\end{center}
\vskip 30pt

\centerline{\bf Abstract}
\noindent
The basis of this work is a simple, extended corollary of Wilson’s theorem. This corollary generates many more quotients than those already generated by Wilson’s theorem, and it was of interest to derive how they relate to each other and build on the established properties of the original quotients. The most important results that were found were expressions for sums of these quotients, modular congruences that extended the results of Lehmer, and generating functions.

\section{Introduction}

In order to efficiently describe results presented in this work, we will define $\mathbb{P}_{1}:=\mathbb{P}\cup\{1\}$, where $\mathbb{P}$ is the set of all primes, as the set of positive non--composite numbers. With this said, Wilson's theorem is a primality test by which
\begin{equation}
W(n)=\frac{1+(n-1)!}{n}
\end{equation}
is an integer if and only if $n\geq 1$ is a positive non--composite. Subsequently, $\{W(p)\mid p\in\mathbb{P}_{1}\}$ is the set of Wilson quotients. We may, in fact, generalise this notion by observing that
\begin{equation}
(n-1)!=(n-k-1)!\prod_{m=1}^{k}(n-m)\Rightarrow W(n)=\frac{1+(n-k-1)!\prod_{m=1}^{k}(n-m)}{n}\,.
\end{equation}
The $\prod_{m=1}^{k}(n-m)$ term is a falling factorial, so we can implement an expansion of this product, the coefficients of which are the Stirling numbers of the first kind $s(a,b)$, defined as $(x)_{n}=\prod_{k=0}^{n-1}(x-k)=\sum_{k=0}^{n}s(n,k)x^{k}$. Thus $\prod_{m=1}^{k}(n-m)=\frac{1}{n}(n)_{k+1}$ and
\begin{equation}
W(n)=\frac{1+(n-k-1)!\frac{1}{n}(n)_{k+1}}{n}=\frac{1+(n-k-1)!\sum_{i=0}^{k+1}s(k+1,i)n^{i-1}}{n}\,,
\end{equation}
and since $s(k+1,0)=0$ for $k\geq 0$, then
\begin{equation}\label{wilson}
W(n)=\frac{1+(n-k-1)!\sum_{i=0}^{k+1}s(k+1,i+1)n^{i}}{n}\,.
\end{equation}
Observing that $\sum_{i=1}^{k+1}s(k+1,i+1)n^{i}\equiv 0\pmod{n}$ and $s(k+1,1)=(-1)^{k}k!$, Wilson's criterion extends to the following corollary.

\begin{corollary}\label{1}
Given non--negative integers $n\geq 1$ and $k<n$,
\[
(-1)^{k}k!(n-k-1)!\equiv -1\pmod{n}
\]
if and only if $n$ is non--composite.
\end{corollary}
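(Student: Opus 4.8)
The plan is to reduce the expression in \eqref{wilson} modulo $n$ and then appeal to Wilson's theorem in its classical form, namely that $n \mid (n-1)! + 1$ if and only if $n \in \mathbb{P}_{1}$. First I would record that, for $0 \le k \le n-1$, the factorial identity $(n-1)! = (n-k-1)!\prod_{m=1}^{k}(n-m)$ holds as an honest identity of integers; this is where the hypothesis $k < n$ is used, to guarantee $n-k-1 \ge 0$, and the degenerate case $n = 1$ forces $k = 0$ and is checked by hand. Combining this with the Stirling expansion already derived in \eqref{wilson}, one obtains the integer identity
\[
(n-1)! + 1 = 1 + (n-k-1)!\sum_{i=0}^{k+1} s(k+1,i+1)\,n^{i}.
\]

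Next I would reduce the right-hand side modulo $n$. Every term of the sum with $i \ge 1$ is divisible by $n$, so the sum is congruent to its $i=0$ term, whose coefficient is $s(k+1,1)$. Using the known value $s(k+1,1) = (-1)^{k}k!$ noted above, this yields
\[
(n-1)! + 1 \equiv (-1)^{k}k!\,(n-k-1)! + 1 \pmod{n}.
\]
Consequently $n$ divides $(-1)^{k}k!\,(n-k-1)! + 1$ exactly when it divides $(n-1)! + 1$, and by Wilson's theorem the latter occurs precisely when $n$ is non--composite. Reading this equivalence in both directions, and noting that it holds for every admissible $k$ while the condition on $n$ does not depend on $k$, completes the proof.

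I do not anticipate a serious obstacle; the argument is essentially bookkeeping layered on top of \eqref{wilson}. The only points that require care are (i) checking that the factorial identity and the Stirling expansion are valid throughout the stated range of $k$, in particular the trivial case $n = 1$, and (ii) being explicit that the final congruence relates the two genuine integers $(n-1)!+1$ and $(-1)^{k}k!\,(n-k-1)!+1$, so that the ``if and only if'' is inherited verbatim from Wilson's theorem rather than from any separate assertion about $W(n)$ being an integer.
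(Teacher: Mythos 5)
Your argument is correct and is essentially the paper's own: the paper derives Corollary~\ref{1} from exactly the same Stirling-number expansion in \eqref{wilson}, discarding the terms with $i\ge 1$ modulo $n$ and using $s(k+1,1)=(-1)^{k}k!$ before invoking Wilson's theorem. Your added care about the integer identity $(n-1)!+1\equiv(-1)^{k}k!(n-k-1)!+1\pmod{n}$ and the degenerate case $n=1$ is a welcome tightening of what the paper leaves implicit.
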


Incidentally, Wilson's theorem is a corollary of its own corollary when $k=0$, and we will thus refer to the more general Wilson--like quotients as
\begin{equation}
M_{k}(n)=\frac{1+(-1)^{k}k!(n-k-1)!}{n}\,,
\end{equation}
where by Corollary \ref{1} $M_{k}(n)$ is an integer if and only if $n\in\mathbb{P}_{1}$. Note that $M_{0}(n)=W(n)$. These quotients yield non--positive results for odd $k$, and therefore we can get rid of the signs by defining
\begin{equation}\label{14}
M_{k}^{+}(n)=\lvert M_{k}(n)\rvert=(-1)^{k}M_{k}(n)=\frac{(-1)^{k}+k!(n-k-1)!}{n}\,.
\end{equation}

\section{Sums of $M_{k}(n)$ and $M_{k}^{+}(n)$}

One area of properties to investigate is the nature of sums of these quotients over varying $k$. Considering the sums
\begin{equation}\label{zski}
Z(n)=\sum_{k=0}^{n-1}M_{k}(n)\,,\quad Z^{+}(n)=\sum_{k=0}^{n-1}M_{k}^{+}(n)\,,
\end{equation}
we can derive the following two theorems concerning them.

\begin{theorem}\label{thm}
$Z(n)$ is an integer for all $n\in\mathbb{N}$.
\end{theorem}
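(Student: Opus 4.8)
The plan is to reduce the statement to a single divisibility fact and then evaluate the relevant sum in closed form. First I would clear denominators: since $M_k(n) = \bigl(1 + (-1)^k k!\,(n-k-1)!\bigr)/n$,
\[
Z(n) = \frac{1}{n}\sum_{k=0}^{n-1}\Bigl(1 + (-1)^k k!\,(n-k-1)!\Bigr) = 1 + \frac{S(n)}{n}, \qquad S(n) := \sum_{k=0}^{n-1}(-1)^k k!\,(n-k-1)!,
\]
so it suffices to prove $n \mid S(n)$ for every $n \ge 1$.

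The crux is a closed form for $S(n)$. I would use the Beta-integral identity $\dfrac{k!\,(n-1-k)!}{n!} = \displaystyle\int_0^1 x^k (1-x)^{n-1-k}\,dx$, so that $S(n)/n! = \int_0^1 \sum_{k=0}^{n-1}(-x)^k(1-x)^{n-1-k}\,dx$. The integrand is a finite geometric-type sum whose ratio has denominator $(1-x)-(-x)=1$, giving $\sum_{k=0}^{n-1}(-x)^k(1-x)^{n-1-k} = (1-x)^n - (-1)^n x^n$; integrating term by term yields
\[
\frac{S(n)}{n!} = \int_0^1\bigl[(1-x)^n - (-1)^n x^n\bigr]\,dx = \frac{1-(-1)^n}{n+1}, \qquad\text{hence}\qquad S(n) = \frac{n!\bigl(1-(-1)^n\bigr)}{n+1}.
\]
(The same identity can be obtained from the known evaluation of $\sum_k (-1)^k/\binom{n-1}{k}$, or by setting up a two-term recurrence in $n$ and inducting; the integral route is the shortest and involves only a finite sum, so no convergence issue arises.)

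It then remains to verify the divisibility. When $n$ is even, $S(n)=0$, so there is nothing to prove; this also follows directly from the involution $k \mapsto n-1-k$ on $\{0,\dots,n-1\}$, which (since $n-1$ is odd) has no fixed point and pairs each term with its negative. When $n$ is odd, $S(n) = 2\,n!/(n+1)$, so $n \mid S(n)$ is equivalent to $(n+1)\mid 2\,(n-1)!$. For $n=1$ and $n=3$ one checks this by hand ($2\mid 2$ and $4\mid 4$). For odd $n\ge 5$, write $n+1 = 2m$ with $m\ge 3$; then $2$ and $m$ are distinct integers in $\{1,\dots,n-1\}$, so $2m \mid (n-1)!$, and the claim follows.

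I expect the closed-form evaluation of $S(n)$ to be the main obstacle: once $S(n) = n!\bigl(1-(-1)^n\bigr)/(n+1)$ is established, the number-theoretic part is routine. The only subtlety to flag afterward is the small cases $n\in\{1,3\}$, where the generic argument for odd $n$ (namely that $n+1$ is an even number exceeding $2$, hence composite with two small distinct divisors) does not directly apply and must be checked separately.
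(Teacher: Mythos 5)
Your proof is correct, but it takes a genuinely different route from the paper's proof of Theorem~\ref{thm}. The paper argues congruentially, term by term: for non--composite $n$ each $M_k(n)$ is already an integer by Corollary~\ref{1}; for composite $n\neq 4$ one has $k!(n-k-1)!\equiv(-1)^k(n-1)!\equiv 0\pmod n$, so each numerator is $\equiv 1$ and the $n$ numerators sum to $0\pmod n$; the case $n=4$ is checked by hand. You instead evaluate $S(n)=\sum_{k=0}^{n-1}(-1)^k k!(n-k-1)!$ in closed form as $n!\bigl(1-(-1)^n\bigr)/(n+1)$ via the Beta integral and a telescoping geometric sum, and then settle the divisibility $(n+1)\mid 2(n-1)!$ elementarily (your closed form, the small cases $n\in\{1,3\}$, and the argument for odd $n\ge 5$ all check out). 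Interestingly, the paper itself carries out essentially your computation later, in the subsection ``Formula for $Z(n)$,'' where it cites the identity $\sum_{k=0}^{n}(-1)^k/\binom{n}{k}=(1+(-1)^n)(n+1)/(n+2)$ from Sury et al.\ and remarks that the resulting formula proves the stronger statement $Z(n)\in\mathbb{N}$; your Beta--integral derivation makes that cited identity self-contained. What each approach buys: yours yields the exact values ($Z(n)=1$ for even $n$, $Z(n)=1+2(n-1)!/(n+1)$ for odd $n$) and positivity for free, and avoids Wilson's theorem and Corollary~\ref{1} entirely; the paper's congruence argument is weaker in its conclusion but establishes the residue of each individual numerator modulo $n$, a fact it immediately reuses in the proof of the theorem about $Z^{+}(n)$, where no such clean closed form settles the matter.
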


\begin{proof}
If $n\in\mathbb{P}_{1}$, then $M_{k}(n)\in\mathbb{Z}\Rightarrow Z(n)\in\mathbb{Z}$ by Corollary \ref{1}. Otherwise, consider the term $(-1)^{k}k!(n-k-1)!$, where $n$ is composite. If it is additionally not equal to $4$, we see that
\[
k!(n-k-1)!=k!(n-(k+1))(n-(k+2))\ldots (n-(n-1))
\]
\[
k!(n-k-1)!\equiv (-1)^{k}k!(k+1)(k+2)\ldots (n-1)\equiv (-1)^{k}(n-1)!\pmod{n}.
\]
Moreover, since $n\neq 4$, $(n-1)!\equiv 0\pmod{n}$, implying $(-1)^{k}k!(n-k-1)!\equiv 0\pmod{n}$. This implies that
\[
\sum_{k=0}^{n-1}1+(-1)^{k}k!(n-k-1)!\equiv \sum_{k=0}^{n-1}1\equiv 0\pmod{n}\,,
\]
which, with the fact that $Z(4)=1$, shows that $Z(n)\in\mathbb{Z}$ for all $n\in\mathbb{N}$.
\end{proof}

\begin{theorem}
$Z^{+}(n)$ is a natural number if and only if $n$ is even or non--composite.
\end{theorem}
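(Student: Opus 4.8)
The plan is to clear denominators and reduce modulo $n$, following the proof of Theorem \ref{thm} but now retaining the alternating sign. Writing
\[
nZ^{+}(n)=\sum_{k=0}^{n-1}\bigl((-1)^{k}+k!(n-k-1)!\bigr)=\Bigl(\sum_{k=0}^{n-1}(-1)^{k}\Bigr)+\sum_{k=0}^{n-1}k!(n-k-1)!,
\]
I would observe that $\sum_{k=0}^{n-1}(-1)^{k}$ equals $0$ for even $n$ and $1$ for odd $n$. Since each $M_{k}^{+}(n)=\lvert M_{k}(n)\rvert\geq 0$ and $M_{0}^{+}(n)=\lvert 1+(n-1)!\rvert/n>0$, the number $Z^{+}(n)$ is always a positive rational, so ``$Z^{+}(n)$ is a natural number'' is equivalent to ``$n$ divides $\sum_{k=0}^{n-1}\bigl((-1)^{k}+k!(n-k-1)!\bigr)$''.

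Then I would split into three cases. If $n\in\mathbb{P}_{1}$, then every $M_{k}(n)\in\mathbb{Z}$ by Corollary \ref{1}, hence every $M_{k}^{+}(n)\in\mathbb{Z}$ and $Z^{+}(n)\in\mathbb{N}$. If $n=4$, a direct computation gives $M_{0}^{+}(4)=\tfrac74$, $M_{1}^{+}(4)=\tfrac14$, $M_{2}^{+}(4)=\tfrac34$, $M_{3}^{+}(4)=\tfrac54$, so $Z^{+}(4)=4\in\mathbb{N}$, consistent with $4$ being even. If $n$ is composite with $n\neq 4$, then the congruence established in the proof of Theorem \ref{thm}, namely $k!(n-k-1)!\equiv(-1)^{k}(n-1)!\equiv 0\pmod n$ for all $k$, yields $\sum_{k=0}^{n-1}k!(n-k-1)!\equiv 0\pmod n$ and therefore $nZ^{+}(n)\equiv\sum_{k=0}^{n-1}(-1)^{k}\pmod n$. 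For even such $n$ this is $\equiv 0$, so $Z^{+}(n)\in\mathbb{N}$; for odd such $n$ it is $\equiv 1$, and since a composite $n$ is at least $9$ we have $nZ^{+}(n)\equiv 1\not\equiv 0\pmod n$, so $Z^{+}(n)\notin\mathbb{Z}$. Combining the cases, $Z^{+}(n)\in\mathbb{N}$ precisely when $n$ is non-composite or even, which is the claim.

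I do not expect a serious obstacle: the argument is essentially a sign-sensitive refinement of Theorem \ref{thm}. The two points that need care are that $n=4$ is a genuine exception to $(n-1)!\equiv 0\pmod n$ and must be treated separately, and that one must verify positivity of $Z^{+}(n)$ so that membership in $\mathbb{Z}$ upgrades to membership in $\mathbb{N}$.
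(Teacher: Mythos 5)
Your proof is correct and follows essentially the same route as the paper: reduce $nZ^{+}(n)$ modulo $n$ using $k!(n-k-1)!\equiv 0\pmod n$ for composite $n\neq 4$ (inherited from the proof of Theorem \ref{thm}), evaluate the alternating sum by parity, treat $n=4$ by direct computation, and use positivity to upgrade integrality to membership in $\mathbb{N}$. Your write-up is in fact slightly more careful than the paper's, in that it explicitly notes that an odd composite $n$ satisfies $n\geq 9$ so that $nZ^{+}(n)\equiv 1\not\equiv 0\pmod n$ genuinely obstructs integrality.
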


\begin{proof}
If $n\in\mathbb{P}_{1}$, then $M^{+}_{k}(n)\in\mathbb{Z}\Rightarrow Z^{+}(n)\in\mathbb{Z}$ by Corollary \ref{1}. Otherwise, consider the term $(-1)^{k}k!(n-k-1)!$, where $n$ is composite. If it is additionally not equal to $4$, the preceeding proof showed that
\[
k!(n-k-1)!\equiv 0\pmod{n}.
\]
This implies that
\[
\sum_{k=0}^{n-1}(-1)^{k}+k!(n-k-1)!\equiv \sum_{k=0}^{n-1}(-1)^{k}\equiv\frac{1+(-1)^{n-1}}{2}\pmod{n}\,,
\]
which, with the fact that $Z^{+}(4)=4$, shows that $Z^{+}(n)\in\mathbb{Z}$, and moreover $Z^{+}(n)\in\mathbb{N}$ since $Z^{+}(n)>0$, if and only if $n$ is even or non--composite.
\end{proof}

\subsection{Formula for $Z(n)$}

Furthermore, we can derive a closed formula for $Z(n)$. Let $S=\sum_{k=0}^{n-1}(-1)^{k}k!(n-k-1)!$. Then $Z(n)=\left(\sum_{k=0}^{n-1}\frac{1}{n}\right)+\frac{S}{n}=1+\frac{S}{n}$. Now, dividing $S$ by $(n-1)!$ yields $\frac{S}{(n-1)!}=\sum_{k=0}^{n-1}\frac{(-1)^{k}k!(n-k-1)!}{(n-1)!}=\sum_{k=0}^{n-1}\frac{(-1)^{k}}{\binom{n-1}{k}}$. An identity for the same sum \cite{sury} tells us that
\begin{equation}
\sum_{k=0}^{n}\frac{(-1)^{k}}{\binom{n}{k}}=\frac{\left(1+(-1)^{n}\right)(n+1)}{n+2}\,,\quad n\in\mathbb{N}_{0}\,,
\end{equation}
so $\frac{S}{(n-1)!}=\frac{\left(1+(-1)^{n-1}\right)n}{n+1}$, and it follows algebraically that
\begin{equation}
Z(n)=1+\frac{\left(1+(-1)^{n-1}\right)(n-1)!}{n+1}\,.
\end{equation}
We observe that $Z(2n)=1$ and $Z(2n-1)=1+\frac{(2n-2)!}{n}$. The former case is obviously a natural number. In the latter case, $n=1$ is a trivial case, and since $2n-2\geq n$ for $n\geq 2$, $n\mid (2n-2)!$ for all $n\in\mathbb{N}$. This is a proof of a stronger version of Theorem \ref{thm}: $Z(n)\in\mathbb{N}$ for all $n\in\mathbb{N}$.

\subsection{Formula for $Z^{+}(n)$}

We can also derive a formula for $Z^{+}(n)$ by initially setting $S=\sum_{k=0}^{n-1}k!(n-k-1)!$, which produces $Z^{+}(n)=\left(\sum_{k=0}^{n-1}\frac{(-1)^{k}}{n}\right)+\frac{S}{n}=\frac{1+(-1)^{n-1}}{2n}+\frac{S}{n}$. Dividing $S$ by $(n-1)!$ produces $\frac{S}{(n-1)!}=\sum_{k=0}^{n-1}\frac{k!(n-k-1)!}{(n-1)!}=\sum_{k=0}^{n-1}\frac{1}{\binom{n-1}{k}}$. An identity from \cite{sury} tells us that
\begin{equation}
\sum_{k=0}^{n}\frac{1}{\binom{n}{k}}=\frac{n+1}{2^{n}}\sum_{r=0}^{n}\frac{2^{r}}{r+1}\,,\quad n\in\mathbb{N}_{0}\,,
\end{equation}
so $\frac{S}{(n-1)!}=\frac{n}{2^{n-1}}\sum_{r=0}^{n-1}\frac{2^{r}}{r+1}$, and it follows algebraically that
\begin{equation}
Z^{+}(n)=\frac{1+(-1)^{n-1}}{2n}+\frac{(n-1)!}{2^{n}}\sum_{r=1}^{n}\frac{2^{r}}{r}\,.
\end{equation}
To deal with the remaining partial sum, we can make use of the Lerch transcendent per the equivalence $\Phi(z,s,a)=z^{n}\Phi(z,s,n+a)+\sum_{r=0}^{n-1}\frac{z^{r}}{(r+a)^{s}}$ \cite{olbc} for $\Re(a),\Re(s)>0$, $n\in\mathbb{N}$, and $z\in\mathbb{C}$. Setting $a,s=1$ and $z=2$,
\begin{equation}
\Phi(2,1,1)=2^{n}\Phi(2,1,n+1)+\sum_{r=0}^{n-1}\frac{2^{r}}{r+1}\,.
\end{equation}
Because $\Phi(z,s,1)=\frac{1}{z}\mathrm{Li}_{s}(z)$ \cite{guison},
\begin{equation}
\mathrm{Li}_{1}(2)=-i\pi=2^{n+1}\Phi(2,1,n+1)+\sum_{r=1}^{n}\frac{2^{r}}{r}
\end{equation}
\begin{equation}
\sum_{r=1}^{n}\frac{2^{r}}{r}=-i\pi-2^{n+1}\Phi(2,1,n+1)\,.
\end{equation}
By substitution we derive that
\begin{equation}\label{zpluslerch}
Z^{+}(n)=\frac{1+(-1)^{n-1}}{2n}-(n-1)!\left(2\Phi(2,1,n+1)+2^{-n}i\pi\right)\,.
\end{equation}

\section{Modular Congruences}

\subsection{Congruences of $M_{k}(p)$ and $M^{+}_{k}(p)$}

Bernoulli numbers $B_{k}$ are signed rational numbers that are ubiquitous in number theory and analysis. We can define them precisely using the exponential generating function $\frac{x}{e^{x}-1}=\sum_{k=0}^{\infty}\frac{B_{k}x^{k}}{k!}$, in which they arise. When $p$ is prime, it is known that
\begin{equation}
M_{0}(p)\equiv W(p)\equiv B_{2(p-1)}-B_{p-1}\pmod{p}\,,
\end{equation}
which is obtained from the congruence relation
\begin{equation}\label{lehmer}
p-1+ptW(p)\equiv pB_{t(p-1)}\pmod{p^{2}}
\end{equation}
by subtraction after substituting $t=1$ and $t=2$ \cite{lehmer}. Recalling \eqref{wilson}, it is clear that $\frac{(p-k-1)!\sum_{i=2}^{k+1}s(k+1,i+1)p^{i}}{p}\equiv 0\pmod{p}$, so
\begin{multline}
M_{0}(p)\equiv\frac{1+(p-k-1)!(s(k+1,1)+ps(k+1,2))}{p}\\\equiv M_{k}(p)+s(k+1,2)(p-k-1)!\pmod{p}\,.
\end{multline}
The Stirling numbers of the first kind $s(k+1,2)$ can be expressed with harmonic numbers as $s(k+1,2)=(-1)^{k+1}k!H_{k}$, which yields
\begin{equation}\label{sub}
M_{0}(p)\equiv M_{k}(p)+(-1)^{k+1}k!H_{k}(p-k-1)!\equiv M_{k}(p)+H_{k}\pmod{p}
\end{equation}
by Corollary \ref{1}. By subtraction,
\begin{equation}\label{mcong}
M_{k}(p)\equiv B_{2(p-1)}-B_{p-1}-H_{k}\pmod{p}\,.
\end{equation}
From \eqref{sub}, we can construct a congruence analogous to Lehmer's,
\begin{equation}
p-1+ptM_{k}(p)\equiv pB_{t(p-1)}-ptH_{k}\pmod{p^{2}}\,.
\end{equation}
For the unsigned $M$--numbers, utilising \eqref{14} produces the congruences
\begin{equation}\label{mpluscong}
M^{+}_{k}(p)\equiv (-1)^{k}M_{k}(p)\equiv (-1)^{k}\left(B_{2(p-1)}-B_{p-1}-H_{k}\right)\pmod{p}\,,
\end{equation}
\begin{equation}
(-1)^{k}(p-1)+ptM^{+}_{k}(p)\equiv (-1)^{k}\left(pB_{t(p-1)}-ptH_{k}\right)\pmod{p^{2}}\,.
\end{equation}
%When $n$ is not prime or $4$, $n\mid nM_{k}(n)-1$, and in that case,
%\begin{equation}
%M_{k}(n)\equiv\frac{1}{n}\pmod{n}\,,\quad M^{+}_{k}(n)\equiv\frac{(-1)^{k}}{n}\pmod{n}\,.
%\end{equation}
%If $n=4$, $M_{k}(4)\equiv -\frac{1}{4}\pmod{4}$.

\subsection{Congruences of $Z(n)$ and $Z^{+}(n)$}

Recalling the formulae $Z(2n)=1$ and $Z(2n-1)=1+\frac{(2n-2)!}{n}$ for $n\in\mathbb{N}$, we obtain an interesting congruence relation for $Z(n)$ by noticing that if $n=2k-1$ is composite, then we can apply the same argument as in the proof of Theorem \ref{thm} to show that $2k-1\mid (2k-2)!$ and thus $2k-1\mid\frac{(2k-2)!}{k}$ since $k\nmid 2k-1$, implying $Z(n)\equiv 1\pmod{n}$ for $n\not\in\mathbb{P}_{1}$. However, if $n=2k-1$ is non--composite, then $k(Z(2k-1)+1)=2k+(2k-2)!\equiv 1-1\equiv 0\pmod{2k-1}$ by Wilson's theorem, and thus, since $k\nmid 2k-1$, $Z(2k-1)+1\equiv 0\pmod{2k-1}\Rightarrow Z(n)\equiv -1\pmod{n}$ for $n\in\mathbb{P}_{1}$. Overall, this results in the congruence
\begin{equation}
Z(n)\equiv(-1)^{\delta_{n\,\mathbb{P}_{1}}}\pmod{n}\,,
\end{equation}
where $\delta_{n\,\mathbb{P}_{1}}=1$ if $n\in\mathbb{P}_{1}$ and $\delta_{n\,\mathbb{P}_{1}}=0$ if $n\not\in\mathbb{P}_{1}$. By \eqref{mcong} we have $\sum_{k=1}^{p-1}H_{k}\equiv\sum_{k=1}^{p-1}\left(M_{0}(p)-M_{k}(p)\right)\equiv(p-1)M_{0}(p)-Z(p)+M_{0}(p)\equiv pM_{0}(p)-Z(p)\equiv 2+(p-1)!\pmod{p}$. Since $1+(p-1)!\equiv 0\pmod{p}$ by Wilson's theorem,
\begin{equation}
\sum_{k=1}^{p-1}H_{k}\equiv 1\pmod{p}
\end{equation}
for $p$ prime. We can also consider the recursive nature of harmonic numbers given by $H_{n}=1+\frac{1}{n}\sum_{k=1}^{n-1}H_{k}$, which by recursion breaks down into the true statement $H_{n+1}=\frac{1}{n+1}+H_{n}$. Substituting this into the previous sum implies the following congruence for $p$ prime,
\begin{equation}
H_{p}\equiv 1+\frac{1}{p}\pmod{p}\,.
\end{equation}

Moreover, we can derive another cute congruence for $Z(2n-1)$ modulo $n$. Since $Z(2n-1)=1+\frac{(2n-2)!}{n}=1+(2n-2)(2n-3)\cdots (n-1)!$, if $n\in\mathbb{P}_{1}$, then $Z(2n-1)\equiv 1-\frac{(2n-2)!}{n!}\pmod{n}$ by Wilson's theorem. $\frac{(2n-2)!}{n!}\equiv (n-2)!\pmod{n}$, so $Z(2n-1)\equiv 1-(n-2)!\pmod{n}$, and thus
\begin{equation}
Z(2n-1)\equiv 0\pmod{n}\,,\quad n\in\mathbb{P}_{1}
\end{equation}
by Corollary \ref{1}.

For $Z^{+}(n)$, we have two peculiar congruence relations, namely
\begin{equation}\label{c1}
Z^{+}(2n)\equiv 0\pmod{2n}\,,\quad n\not\in\mathbb{P}_{1}\setminus\{2\}\,,
\end{equation}
\begin{equation}\label{c2}
Z^{+}(2n)\equiv n+1\pmod{2n}\,,\quad n\in\mathbb{P}\setminus\{2\}\,.
\end{equation}
Recalling \eqref{zski}, $Z^{+}(2n)=\sum_{k=0}^{2n-1}\frac{(-1)^{k}+k!(2n-k-1)!}{2n}=\frac{1}{n}\sum_{k=0}^{n-1}k!(2n-k-1)!$ due to the even number of terms and consequential symmetry of the summand. Evidently, since $n\not\in\mathbb{P}_{1}\setminus\{2\}$ and if $n\neq 2$ and $n\neq 4$, $n^{2}\mid (2n-k-1)!$ for all $0\leq k\leq n-1$. Additionally, $2\mid k!$ for $2\leq k\leq n-1$ and $\frac{(2n-1)!+(2n-2)!}{2n^{2}}=\frac{(2n-2)!}{n}\in\mathbb{N}$ for $n\in\mathbb{N}$. With the cases $Z^{+}(4)=4$ and $Z^{+}(8)=1536$, this proves \eqref{c1}.

For the second congruence, again consider $Z^{+}(2n)=\frac{1}{n}\sum_{k=0}^{n-1}k!(2n-k-1)!$. Let us select distinct $0\leq k_{1}<k_{2}\leq n-1$ and take the sum of two terms $k_{1}!(2n-k_{1}-1)!+k_{2}!(2n-k_{2}-1)!=k_{1}!(2n-k_{2}-1)!\left(\frac{(2n-k_{1}-1)!}{(2n-k_{2}-1)!}+\frac{k_{2}!}{k_{1}!}\right)$. Setting $k_{2}=k_{1}+1$, this becomes $k_{1}!(2n-k_{1}-2)!(2n)$. Clearly, $2n^{2}\mid k_{1}!(2n-k_{1}-2)!(2n)$. Because $n$ is an odd prime, the sum for $Z^{+}(2n)$ will have an odd number of terms, allowing us to pair every other term with its succeeding term as described, leaving only the last one, $(n-1)!n!$, unpaired. Therefore, $nZ^{+}(2n)\equiv -n!\pmod{2n^{2}}$ by Wilson's theorem, and since Wilson quotients for odd primes are always odd, $2n^{2}\mid n(1+(n-1)!+n)$. A simple algebraic manipulation allows us to see that $-n!\equiv n^{2}+n\pmod{2n^{2}}$, which proves \eqref{c2}.

As for when $n$ is prime, we invoke \eqref{mpluscong} to observe that
\begin{multline}
Z^{+}(p)\equiv\sum_{k=0}^{p-1}(-1)^{k}\left(B_{2(p-1)}-B_{p-1}-H_{k}\right)\\
\equiv B_{2(p-1)}-B_{p-1}-\sum_{k=1}^{p-1}(-1)^{k}H_{k}\pmod{p}\,.
\end{multline}
Of course, $\sum_{k=1}^{p-1}(-1)^{k}H_{k}=-1+\left(1+\frac{1}{2}\right)-\left(1+\frac{1}{2}+\frac{1}{3}\right)+\ldots+\left(1+\ldots+\frac{1}{p-1}\right)=\frac{1}{2}\sum_{n=1}^{\frac{p-1}{2}}\frac{1}{n}=\frac{1}{2}H_{\frac{p-1}{2}}$, so
\begin{equation}
Z^{+}(p)\equiv B_{2(p-1)}-B_{p-1}-\frac{1}{2}H_{\frac{p-1}{2}}\pmod{p}
\end{equation}
when $p$ is an odd prime. If $p=2$, $Z^{+}(2)=1\equiv B_{2}-B_{1}+1\pmod{2}$. It is worth noting that this congruence bears a close resemblance to the one of $M_{\frac{p-1}{2}}(p)$. Thus, by combining \eqref{mcong} with the above congruence, we obtain the interesting relation
\begin{equation}
Z^{+}(p)\equiv\frac{1}{2}\left(M_{0}(p)+M_{\frac{p-1}{2}}(p)\right)\pmod{p}\,.
\end{equation}

\section{Generating Functions}

\subsection{Of $M$--numbers}

Considering the sequences of quotients $M_{k}(n)$ for $n\in\mathbb{N}$, its exponential generating function\footnote{Note that ordinary generating functions are of little utility in this instance, as their radius of convergence is zero.} for $\lvert x\rvert\leq R$, where $R>0$ is the radius of convergence, is derived as follows,
\begin{equation}\label{egf}
\operatorname{EG}(M_{k}(n);x)=\sum_{n=0}^{\infty}M_{k}(n+k+1)\frac{x^{n}}{n!}=\sum_{n=0}^{\infty}\left[\frac{1+(-1)^{k}k!n!}{n+k+1}\right]\frac{x^{n}}{n!}\,.
\end{equation}
Note that the first term appearing in the summand is $M_{k}(k+1)$, as $M_{k}(p)$ is undefined for $p\leq k$. Also observe that
\begin{equation}
\sum_{n=0}^{\infty}\left[\frac{1+(-1)^{k}k!n!}{n+k+1}\right]\frac{x^{n}}{n!}<\sum_{n=0}^{\infty}\left(1+(-1)^{k}k!\right)x^{n}\,,
\end{equation}
which has a radius of convergence $R=1$. Therefore, $\operatorname{EG}(M_{k}(n);x)$ has a positive radius of convergence of at least $1$. This allows the summation to be split as
\begin{equation}
\operatorname{EG}(M_{k}(n);x)=\sum_{n=0}^{\infty}\frac{x^{n}}{n!(n+k+1)}+(-1)^{k}k!\sum_{n=0}^{\infty}\frac{x^{n}}{n+k+1}\,,
\end{equation}
where the radius of convergence $R=1$ from the second summation. As dictated by the definitions of the lower incomplete gamma function $\gamma(a,x)=x^{a}\sum_{n=0}^{\infty}\frac{(-x)^{n}}{n!(n+a)}$ and the Lerch transcendent $\Phi(x,s,a)=\sum_{n=0}^{\infty}\frac{x^{n}}{(n+a)^{s}}$ \cite{olbc}, we derive the exponential generating function by setting $a=k+1$ and $s=1$,
\begin{equation}
\operatorname{EG}(M_{k}(n);x)=(-1)^{k}\left(k!\Phi(x,1,k+1)-\frac{\gamma(k+1,-x)}{x^{k+1}}\right)\,.
\end{equation}

For the exponential generating function of the signless $M^{+}_{k}(p)$, it is not difficult to see from \eqref{14} that
\begin{equation}
\operatorname{EG}(M^{+}_{k}(n);x)=(-1)^{k}\operatorname{EG}(M_{k}(n);x)=k!\Phi(x,1,k+1)-\frac{\gamma(k+1,-x)}{x^{k+1}}\,,
\end{equation}
since the summations are independent of $k$.

\subsection{Of $Z$--numbers}

We can also derive the exponential generating function of $Z(n)$ for $n\in\mathbb{N}$ and $\lvert x\rvert\leq R$, where $R>0$ is the radius of convergence,
\begin{equation}\label{egfz}
\operatorname{EG}(Z(n);x)=\sum_{n=0}^{\infty}Z(n+1)\frac{x^{n}}{n!}=\sum_{n=0}^{\infty}\left[1+\frac{\left(1+(-1)^{n}\right)n!}{n+2}\right]\frac{x^{n}}{n!}\,.
\end{equation}
Similarly, the sequence begins at $Z(1)$, as $Z(0)$ is undefined by its original definition. Since
\begin{equation}
\sum_{n=0}^{\infty}\left[1+\frac{\left(1+(-1)^{n}\right)n!}{n+2}\right]\frac{x^{n}}{n!}<\sum_{n=0}^{\infty}3x^{n}\,,
\end{equation}
which has a radius of convergence $R=1$, $\operatorname{EG}(Z(n);x)$ has a positive radius of convergence of at least $1$. Additionally, $\sum_{n=0}^{\infty}\frac{\left(1+(-1)^{n}\right)x^{n}}{n+2}=2\sum_{n=0}^{\infty}\frac{x^{2n}}{2n+2}=\sum_{n=0}^{\infty}\frac{x^{2n}}{n+1}$. This allows the summation to be split as
\begin{equation}
\operatorname{EG}(Z(n);x)=\sum_{n=0}^{\infty}\frac{x^{n}}{n!}+\sum_{n=0}^{\infty}\frac{x^{2n}}{n+1}\,,
\end{equation}
where the radius of convergence is clearly $R=1$. Of course, $\sum_{n=0}^{\infty}\frac{x^{n}}{n!}=e^{x}$, and because the natural logarithm has the Taylor expansion $\ln(z)=\sum_{n=1}^{\infty}(-1)^{n+1}\frac{(z-1)^{n}}{n}$ for $z\in (0,2]$, we see that
\begin{equation}
\ln(1-x^{2})=\sum_{n=1}^{\infty}(-1)^{n+1}\frac{(-x^{2})^{n}}{n}=-\sum_{n=1}^{\infty}\frac{x^{2n}}{n}=-x^{2}\sum_{n=1}^{\infty}\frac{x^{2n-2}}{n}=-x^{2}\sum_{n=0}^{\infty}\frac{x^{2n}}{n+1}\,.
\end{equation}
Therefore, the exponential generating function is obtained,
\begin{equation}
\operatorname{EG}(Z(n);x)=e^{x}-\frac{1}{x^{2}}\ln\left(1-x^{2}\right)\,.
\end{equation}

The exponential generating function for $Z^{+}(n)$ is a pinch more involved, though invoking the \eqref{zpluslerch} for $n\in\mathbb{N}$ and $\lvert x\rvert\leq R$, we obtain
\begin{multline}
\operatorname{EG}(Z^{+}(n);x)=\sum_{n=0}^{\infty}Z^{+}(n+1)\frac{x^{n}}{n!}\\
=\sum_{n=0}^{\infty}\left[\frac{1+(-1)^{n}}{2n+2}-n!\left(2\Phi(2,1,n+2)+2^{-n-1}i\pi\right)\right]\frac{x^{n}}{n!}\,.
\end{multline}
Since $\lvert\Phi(2,1,n+2)\rvert<1$ for $n\geq 1$, we can say $\operatorname{EG}(Z^{+}(n);x)<\sum_{n=0}^{\infty}7x^{n}$, which has a radius of convergence of $1$, and thus our generating function converges for $R\geq 1$. Therefore, under splitting, simplification, and removal of null terms,
\begin{equation}
\operatorname{EG}(Z^{+}(n);x)=2\sum_{n=0}^{\infty}\frac{x^{2n}}{(2n)!(2n+2)}-2\sum_{n=0}^{\infty}\Phi(2,1,n+2)x^{n}-\frac{i\pi}{2}\sum_{n=0}^{\infty}\left(\frac{x}{2}\right)^{n}\,.
\end{equation}
For the first summation, let us take the series expansions of hyperbolic trigonometric functions $\sinh x=\sum_{n=0}^{\infty}\frac{x^{2n+1}}{(2n+1)!}$ and $\cosh x=\sum_{n=0}^{\infty}\frac{x^{2n}}{(2n)!}$ to derive that
\begin{equation}
\cosh x-1=-1+\sum_{n=0}^{\infty}\frac{x^{2n}}{(2n)!}=\sum_{n=1}^{\infty}\frac{x^{2n}}{(2n)!}=\sum_{n=0}^{\infty}\frac{x^{2n+2}}{(2n+2)!}\,,
\end{equation}
and thus
\begin{equation}
x\sinh x -\cosh x+1=\sum_{n=0}^{\infty}\left[\frac{x^{2n+2}}{(2n+1)!}-\frac{x^{2n+2}}{(2n+2)!}\right]=x^{2}\sum_{n=0}^{\infty}\frac{x^{2n}}{(2n)!(2n+2)}\,,
\end{equation}
which is precisely our series. Additionally, it is useful to recall that $\sum_{n=0}^{\infty}x^{n}$ generates $\frac{1}{1-x}$. Putting these together, we obtain the exponential generating function
\begin{equation}
\operatorname{EG}(Z^{+}(n);x)=\frac{2}{x^{2}}(x\sinh x-\cosh x+1)+\frac{i\pi}{x-2}-2\operatorname{G}(\Phi(2,1,n+2);x)\,,
\end{equation}
where $\operatorname{G}(\Phi(2,1,n+2);x)$ is the generating function of $\Phi(2,1,n+2)$. For the sake of notation, let us define $\tau(z,s,\alpha):=\operatorname{G}(\Phi(z,s,\alpha);x)$, so
\begin{equation}
\operatorname{EG}(Z^{+}(n);x)=\frac{2}{x^{2}}(x\sinh x-\cosh x+1)+\frac{i\pi}{x-2}-2\tau(2,1,n+2)\,.
\end{equation}

\section{Tables}

\begin{center}
$$Z(n)$$
\begin{tabular}{||c | c c c c c c c c c c c c c||} 
 \hline
 $n:$ & $1$ & $2$ & $3$ & $4$ & $5$ & $6$ & $7$ & $8$ & $9$ & $10$ & $11$ & $12$ & $13$ \\
 \hline\hline
 $*$ & $2$ & $1$ & $2$ & $1$ & $9$ & $1$ & $181$ & $1$ & $8065$ & $1$ & $604801$ & $1$ & $68428801$ \\
 \hline
\end{tabular}
\begin{tabular}{||c | c c c c c c c||} 
 \hline
 $n:$ & $14$ & $15$ & $16$ & $17$ & $18$ & $19$ & $20$ \\
 \hline\hline
 $*$ & $1$ & $10897286401$ & $1$ & $2324754432001$ & $1$ & $640237370572801$ & $1$ \\
 \hline
\end{tabular}
\end{center}

\begin{center}
$$Z^{+}(n)$$
\begin{tabular}{||c | c c c c c c c c c c c c||} 
 \hline
 $n:$ & $1$ & $2$ & $3$ & $4$ & $5$ & $6$ & $7$ & $8$ & $9$ & $10$ & $11$ & $12$ \\
 \hline\hline
 $*$ & $2$ & $1$ & $2$ & $4$ & $13$ & $52$ & $259$ & $1536$ & $\frac{95617}{9}$ & $84096$ & $750371$ & $7453440$ \\
 \hline
\end{tabular}
\begin{tabular}{||c | c c c c c||} 
 \hline
 $n:$ & $13$ & $14$ & $15$ & $16$ & $17$ \\
 \hline\hline
 $*$ & $81566917$ & $974972160$ & $\frac{189550368001}{15}$ & $176504832000$ & $2642791002353$ \\
 \hline
\end{tabular}
\end{center}
\newpage
\begin{center}
$$M_{k}(n)$$
\begin{tabular}{||c | c c c c c c c c c c||} 
 \hline
 $k$ & $n:1$ & $2$ & $3$ & $4$ & $5$ & $6$ & $7$ & $8$ & $9$ & $10$ \\
 \hline\hline
 $0$ & $2$ & $1$ & $1$ & $\frac{7}{4}$ & $5$ & $\frac{121}{6}$ & $103$ & $\frac{5041}{8}$ & $\frac{40321}{9}$ & $\frac{362881}{10}$ \\
 \hline
 $1$ & $*$ & $0$ & $0$ & $-\frac{1}{4}$ & $-1$ & $-\frac{23}{6}$ & $-17$ & $-\frac{719}{8}$ & $-\frac{5039}{9}$ & $-\frac{40319}{10}$ \\
 \hline
 $2$ & $*$ & $*$ & $1$ & $\frac{3}{4}$ & $1$ & $\frac{13}{6}$ & $7$ & $\frac{241}{8}$ & $\frac{1441}{9}$ & $\frac{10081}{10}$ \\
 \hline
 $3$ & $*$ & $*$ & $*$ & $-\frac{5}{4}$ & $-1$ & $-\frac{11}{6}$ & $-5$ & $-\frac{143}{8}$ & $-\frac{719}{9}$ & $-\frac{4319}{10}$ \\
 \hline
 $4$ & $*$ & $*$ & $*$ & $*$ & $5$ & $\frac{25}{6}$ & $7$ & $\frac{145}{8}$ & $\frac{577}{9}$ & $\frac{2881}{10}$ \\
 \hline
 $5$ & $*$ & $*$ & $*$ & $*$ & $*$ & $-\frac{119}{6}$ & $-17$ & $-\frac{239}{8}$ & $-\frac{719}{9}$ & $-\frac{2879}{10}$ \\
 \hline
 $6$ & $*$ & $*$ & $*$ & $*$ & $*$ & $*$ & $103$ & $\frac{721}{8}$ & $\frac{1441}{9}$ & $\frac{4321}{10}$ \\
 \hline
 $7$ & $*$ & $*$ & $*$ & $*$ & $*$ & $*$ & $*$ & $-\frac{5039}{8}$ & $-\frac{5039}{9}$ & $-\frac{10079}{10}$ \\
 \hline
 $8$ & $*$ & $*$ & $*$ & $*$ & $*$ & $*$ & $*$ & $*$ & $\frac{40321}{9}$ & $\frac{40321}{10}$ \\
 \hline
 $9$ & $*$ & $*$ & $*$ & $*$ & $*$ & $*$ & $*$ & $*$ & $*$ & $-\frac{362879}{10}$ \\
 \hline
\end{tabular}

\begin{tabular}{||c | c c c c c||} 
 \hline
 $k$ & $n:11$ & $12$ & $13$ & $14$ & $15$ \\
 \hline\hline
 $0$ & $329891$ & $\frac{39916801}{12}$ & $36846277$ & $\frac{6227020801}{14}$ & $\frac{87178291201}{15}$ \\
 \hline
 $1$ & $-32989$ & $-\frac{3628799}{12}$ & $-3070523$ & $-\frac{479001599}{14}$ & $-\frac{6227020799}{15}$ \\
 \hline
 $2$ & $7331$ & $\frac{725761}{12}$ & $558277$ & $\frac{79833601}{14}$ & $\frac{958003201}{15}$ \\
 \hline
 $3$ & $-2749$ & $-\frac{241919}{12}$ & $-167483$ & $-\frac{21772799}{14}$ & $-\frac{239500799}{15}$ \\
 \hline
 $4$ & $1571$ & $\frac{120961}{12}$ & $74437$ & $\frac{8709121}{14}$ & $\frac{87091201}{15}$ \\
 \hline
 $5$ & $-1309$ & $-\frac{86399}{12}$ & $-46523$ & $-\frac{4838399}{14}$ & $-\frac{43545599}{15}$ \\
 \hline
 $6$ & $1571$ & $\frac{86401}{12}$ & $39877$ & $\frac{3628801}{14}$ & $\frac{29030401}{15}$ \\
 \hline
 $7$ & $-2749$ & $-\frac{120959}{12}$ & $-46523$ & $-\frac{3628799}{14}$ & $-\frac{25401599}{15}$ \\
 \hline
 $8$ & $7331$ & $\frac{241921}{12}$ & $74437$ & $\frac{4838401}{14}$ & $\frac{29030401}{15}$ \\
 \hline
 $9$ & $-32989$ & $-\frac{725759}{12}$ & $-167483$ & $-\frac{8709119}{14}$ & $-\frac{43545599}{15}$ \\
 \hline
 $10$ & $329891$ & $\frac{3628801}{12}$ & $558277$ & $\frac{21772801}{14}$ & $\frac{87091201}{15}$ \\
 \hline
 $11$ & $*$ & $-\frac{39916799}{12}$ & $-3070523$ & $-\frac{79833599}{14}$ & $-\frac{239500799}{15}$ \\
 \hline
 $12$ & $*$ & $*$ & $36846277$ & $\frac{479001601}{14}$ & $\frac{958003201}{15}$ \\
 \hline
 $13$ & $*$ & $*$ & $*$ & $-\frac{6227020799}{14}$ & $-\frac{6227020799}{15}$ \\
 \hline
 $14$ & $*$ & $*$ & $*$ & $*$ & $\frac{87178291201}{15}$ \\
 \hline
\end{tabular}
\end{center}

\end{document}